\theoremstyle{plain}
\newtheorem{theorem}{Theorem}[section]
\theoremstyle{definition}
\numberwithin{equation}{section}
\newcommand{\cond}[1]{\, #1 \vert \,}
\newcommand{\diff}{\,\mathrm{d}}
\newcommand{\mean}{\, \mathrm{E} \,}
\title{Martingale estimation functions for Bessel processes}
\author{Nicole Hufnagel, Jeannette H.C. Woerner} 
\address{Fakult\"at Mathematik, Technische Universit\"at Dortmund,
          Vogelpothsweg 87,
          D-44221 Dortmund, Germany}
\email{nicole.hufnagel@math.tu-dortmund.de, jeannette.woerner@math.tu-dortmund.de}
\subjclass[2010]{Primary 62M15; Secondary 60J60}
\keywords{Bessel process, non-ergodic diffusion, martingale estimating function, eigenfunctions}
\begin{document}
\date{\today}

\begin{abstract}
In this paper we derive martingale estimating functions for the dimensionality parameter of a Bessel process based on the eigenfunctions of the diffusion operator. Since a Bessel process is non-ergodic and the theory of martingale estimating functions is developed for ergodic diffusions, we use the space-time transformation of the Bessel process and formulate our results for a modified Bessel process. We deduce consistency, asymptotic normality and discuss optimality. It turns out that the martingale estimating function based of the first eigenfunction of the modified Bessel process coincides with the linear martingale estimating function for the Cox Ingersoll Ross process. Furthermore, our results may also be applied to estimating the multiplicity parameter of a one-dimensional Dunkl process. 
\end{abstract}

\maketitle

\section{Introduction} 
Martingale estimating functions introduced in \cite{bibbysorensen1995} provide a well-established method for inference in discretely observed diffusion processes, when the likelihood function is unknown or too complicated. The idea behind martingale estimating functions is to provide a simple approximation of the true likelihood, which forms a martingale and hence leads under suitable regularity assumptions to consistent and asymptotically normal estimators. One way of approximating the likelihood function is by Taylor expansion leading to linear and quadratic martingale estimating functions, cf. \cite{bibbysorensen1995}. Another possibility is to use the eigenfunctions of the associated diffusion operator, cf. \cite{kessler1999}. In this context a suitable optimality concept was introduced by \cite{godambeheyde1987} and \cite{heyde1988}. For a general theory of asymptotic statistics for diffusion processes we refer e.g. to \cite{Hoepfner2014}.

Our aim in this paper is to estimate the dimensionality or index parameter $\vartheta\in\Theta \subset (-\frac{1}{2},\infty)$ of a classical one-dimensional Bessel process given by the stochastic differential equation
\begin{align}
	\label{eq:sdgl_BP}
	\left\{ 
		\begin{array}{ll}
			\diff Y_t &=\diff B_t+\left(\vartheta+\frac{1}{2}\right)\frac{1}{Y_t} \diff t,\\
			Y_0&=y_0>0,
		\end{array}
	\right.
\end{align} 
where $B$ denotes a standard Brownian motion. Since a Bessel process is non-ergodic, we transform it into a stationary and ergodic process by adding a mean reverting term with speed of mean reversion $\alpha>0$ in the drift, which we call modified Bessel process in the following. The two process are then related by the well-known space-time transformation of a Bessel process. Since the eigenfunctions of the associated diffusion operator of the modified Bessel process are known, we base our martingale estimation function on these eigenfunctions and follow the lines of \cite{kessler1999}. 

For the estimating function based on the first eigenfunction we obtain an explicit formula for the estimator, which only depends quadratically on the observations. We see that the estimator coincides with the one of a linear martingale estimation function for the Cox Ingersoll Ross process, which is the square of the modified Bessel process. We discuss optimality in the sense of Godambe and Heyde. Note that in \cite{overbeck} also local asymptotic normality of the Cox Ingersoll Ross process for $\vartheta>0$ was established.

Furthermore, we consider martingale estimating functions based on the first two eigenfunctions and discuss the improvement of the asymptotic variance. In this case we do not get an explicit estimator anymore.

Note that our results for the Bessel process may also be used to estimate the multiplicity parameter $k$ of a one-dimensional Dunkl process, a special jump diffusion given by the generator 
$$ L_k u(x)=u''(x)+k\left(\frac{2}{x}u'(x)+\frac{u(-x)-u(x)}{x^2}\right),\;\; k\geq 0.$$
By the last term in the generator we see that the associated process possesses jumps due to a reflection, which lead to a sign change.
Hence, the modulus of this Dunkl process is a Bessel process with dimensionality parameter $k-1/2$, cf. \cite{CGY}. For the Dunkl process the multiplicity parameter is of special interest, since it determines the jump activity, namely for $k\geq 0$ a Dunkl process has a finite jump activity, whereas for $k<1/2$ we have infinite jump activity.

The paper is organised as follows: in Section 2 we collect the basic facts on the processes, Section 3 is devoted to martingale estimation functions based on the first eigenfunction and Section 4 to estimators based on two eigenfunctions.

\section{Basic results on Bessel processes and a stationary modification}
In this section we introduce the basic results on the underlying diffusions, which we will need in the following for the theory of martingale estimation functions. Our aim is to estimate the parameter $\vartheta\in\Theta \subset (-\frac{1}{2},\infty)$ of a classical one-dimensional Bessel process. Since a Bessel process is non-ergodic and most results on parameter estimation for diffusions are developed for ergodic diffusions, we start by introducing a modification of a Bessel process which is ergodic.

We consider the stochastic differential equation
\begin{align}
	\label{eq:sdgl_korrigierter_BP}
	\left\{ 
		\begin{array}{ll}
			\diff X_t &=\diff B_t+\left[\left(\vartheta+\frac{1}{2}\right)\frac{1}{X_t}-\alpha X_t\right] \diff t,\\
			X_0&=x_0>0
		\end{array}
	\right.
\end{align} 
for a Brownian motion $B$, some fixed $\alpha >0$ and the parameter of interest $\vartheta\in\Theta \subset (-\frac{1}{2},\infty)$. The equation \eqref{eq:sdgl_korrigierter_BP} is similar to the equation defining a Bessel process except for the drift term $-\alpha X_t \diff  t$, which we add to ensure ergodicity and stationarity.

In order to determine the density of $(X_t)_{t\geq 0}$, we consider the space time transformation
\begin{align}
	\label{eq:transformation_explizit}
	X_t:=\exp(-\alpha t)Y_{\frac{\exp(2\alpha t)-1}{2\alpha }}
\end{align}
for a Bessel process $(Y_t)_{t\geq 0}$ with index $\vartheta$, which immediately follows by It\^{o}'s formula. 
Therefore, we derive the distribution of $(X_t)_{t\geq 0}$ by using the well-known distribution of the Bessel process $(Y_t)_{t\geq 0}$, namely 
\begin{align*}
	P(Y_t\leq z|Y_0=x)=\frac{2}{(2t)^\vartheta \Gamma(\vartheta+1)}\int_0^z j_\vartheta\left(\frac{ixy}{t}\right) e^{-\frac{x^2+y^2}{2t}}y^{2\vartheta+1}\diff y; \quad x, z >0,
\end{align*}
where 
\begin{align*}
&j_\vartheta(z):=\frac{\Gamma(\vartheta+1)}{\Gamma(\vartheta+\frac{1}{2})\Gamma(\frac{1}{2})}\int_{-1}^{1}e^{isz}(1-s^2)^{\vartheta-\frac{1}{2}}\diff s
\end{align*}
is the Bessel function with index $\vartheta$ (see for instance \cite{ito1974diffusion}). Hence, we obtain
\begin{align*}
&P(X_t\leq z|X_0=x)\stackrel{\eqref{eq:transformation_explizit}}{=}P(Y_{\frac{\exp(2\alpha t)-1}{2\alpha }}\leq \exp(\alpha t)z|Y_0=x)\\
&=  C_{\vartheta,\alpha,t}\int_0^{z} j_\vartheta\left(ixy\frac{2\alpha  \exp(\alpha t)}{\exp(2\alpha t)-1}\right)
\exp\left(-\alpha\frac{x^2+y^2\exp(2\alpha t)}{\exp(2\alpha t)-1}\right)y^{2\vartheta+1} \diff y
\end{align*}
with
\begin{align*}
	C_{\vartheta,\alpha,t}:=\frac{2\alpha^\vartheta (\exp(2\alpha t))^{\vartheta+1}}{\Gamma(\vartheta+1)(\exp(2\alpha t)-1)^\vartheta }.
\end{align*}
We denote the density of $X_{\Delta}$ with starting point $x$ by $p_\vartheta(x,\cdot ,\Delta)$ and the distribution of $X_\Delta$ by $P_{\vartheta}$. In the following, we check that $(X_t)_{t\geq 0}$ is indeed stationary and ergodic and determine the invariant measure.
The density of the scale measure for a fixed $\xi\in(0,\infty)$ is defined as
	\begin{align*}
	s(x)&:=\exp\left( -2\int_{\xi}^x\left(\vartheta+\frac{1}{2}\right)\frac{1}{y}-\alpha 	 		      
	y\diff y \right)\\
	&=\left(\frac{x}{\xi}\right)^{-(2\vartheta +1)} e^{\alpha (x^2-\xi^2)}.
	\end{align*}
 Note that, due to the singularity in the drift, we initially have to consider some positive interior point $\xi$.

From this we may deduce that $(X_t)_{t\geq 0}$ is ergodic as we see that the conditions
\begin{align*}
	&\int_0^\xi s(x) \diff x=\infty,\quad \int_\xi^\infty s(x)\diff x=\infty \quad \textrm{and} \quad \int_0^\infty \frac{1}{s(x)}\diff x<\infty
	\end{align*}
	are satisfied.
	
As the invariant measure is defined via the scale measure $m(\diff x):=\frac{1}{s(x)}\diff x$, we obtain by a straight forward calculation that
the density of the invariant probability measure is given by
	\begin{align*}
	\mu_\vartheta(x)=\frac{2\alpha^{\vartheta +1}}{\Gamma(\vartheta +1)}x^{2\vartheta+1}e^{-\alpha x^2}
	\end{align*}
	on $(0,\infty)$ with respect to the Lebesgue measure.
	
For the calculation of the asymptotic variance we will need	
the symmetric distribution $Q_\Delta^\vartheta$ of two consecutive observations $X_{(i-1)\Delta}$ and $X_{i\Delta}$ on $(0,\infty)^2$. It is given by
\begin{align*}
&Q_\Delta^\vartheta(\diff x,\diff y)=\mu_\vartheta (x) p(x,y,\Delta)\diff x\diff y\\
&=C_\vartheta j_\vartheta\left(ixy\frac{2\alpha  \exp(\alpha \Delta)}{\exp(2\alpha \Delta)-1}\right)
\exp\left(-\frac{\alpha\exp(2\alpha\Delta)}{\exp(2\alpha\Delta)-1}(x^2+y^2)\right)(xy)^{2\vartheta+1} \diff y \diff x
\end{align*}
with 
\begin{align*}
	C_\vartheta:=\frac{4\alpha^{2\vartheta} (\exp(2\alpha \Delta))^{\vartheta+1}}{\Gamma(\vartheta+1)^2(\exp(2\alpha \Delta)-1)^\vartheta }.
\end{align*}

\section{Martingale estimating functions based on eigenfunctions}
In this section we proceed similarly to \cite{bibbysorensen1995} and \cite{kessler1999}  to construct martingale estimation functions for our parameter of interest $\vartheta$. The concepts in these papers are based on ergodic diffusions. As Bessel processes are non-ergodic we constructed the ergodic and stationary version in \eqref{eq:sdgl_korrigierter_BP}. Let $X_{\Delta},\dots ,X_{n\Delta}$ be discrete observations of the process.  
	We consider the eigenfunctions of the generator 
	$$L_\vartheta f(x) =\left[\left(\vartheta +\frac{1}{2}\right)\frac{1}{x}-\alpha x\right]f'(x)+\frac{1}{2}f''(x),$$
	which are the solutions of $L_\vartheta \phi_\eta=-\lambda_\eta \phi_\eta$ given by 
	\begin{align*}
		\lambda_\eta=2\alpha \eta, \quad \phi_\eta(x,\vartheta)= \sum_{k=0}^{\eta} \frac{(-\eta)_k}{(\vartheta + 1 )_k k!} (\alpha x^2)^k,\quad \eta \in \mathbb{N}
	\end{align*}
	with the Pochhammer symbols $(x)_k:=\frac{\Gamma(x+n)}{\Gamma(x)}=x (x+1) \dots (x+k-1)$.
	According to \cite{kessler1999}, the property 
	\begin{align*}
		\int_0^\infty (\phi_\eta'(x,\vartheta))^2\mu_\vartheta(\diff x)= \frac{2\alpha^{\vartheta +1}}{\Gamma(\vartheta +1)}\int_0^\infty (\phi_\eta'(x,\vartheta))^2x^{2\vartheta+1}e^{-\alpha x^2} \diff x<\infty
	\end{align*}
	for the polynomials $\phi_\eta$ is sufficient to deduce 
	$$\mathrm{E}_\vartheta(\phi_\eta(X_{i\Delta},\vartheta)|X_{(i-1)\Delta})=e^{-\lambda_\eta \Delta}\phi_\eta(X_{(i-1)\Delta},\vartheta)$$
	by It\^{o}'s formula. Consequently, we may use the general theory on estimators based on eigenfunctions given in \cite{kessler1999}. However, in our case we may calculate the involved quantities and obtain explicit results. For the first eigenfunction $\phi_1(x,\vartheta)=1-\frac{\alpha x^2}{\vartheta+1}$ we consider the estimator based on the martingale estimating function
	\begin{align*}
		G_n(\vartheta)&=\sum_{i=1}^{n}(\phi_1(X_{i\Delta},\vartheta)-e^{-\lambda_1\Delta}\phi_1(X_{(i-1)\Delta},\vartheta) )\\
		&= n(1-e^{-2\alpha \Delta})+\sum_{i=1}^n\left(e^{-2\alpha \Delta}\frac{\alpha X_{(i-1)\Delta}^2}{\vartheta+1}-\frac{\alpha X_{i\Delta}^2}{\vartheta+1}\right).
	\end{align*}
	The unique solution of $G_n(\widehat{\vartheta}_n)=0$ is
	\begin{align*}
		\widehat{\vartheta}_n=\frac{\alpha \sum_{i=1}^{n} (X_{i\Delta }^{2}-X_{(i-1)\Delta}^{2} e^{-2\alpha \Delta})}{n(1-e^{-2\alpha \Delta})}-1.
	\end{align*}
	Now, we may deduce consistency and asymptotic normality along the same lines as for general martingale estimating functions.
	\begin{theorem}
		\label{theo:asymptotic_estimator_mean_value}
		For every true value $\vartheta_0 \in \Theta \subset (-\frac{1}{2},\infty)$ we have 
		\begin{itemize}
			\item[(i)] $\widehat{\vartheta}_n\to \vartheta_0$ in probability and
			\item[(ii)] $\sqrt{n}(\widehat{\vartheta}_n-\vartheta_0)\to N(0,\sigma^2(\vartheta_0))$ in distribution
		\end{itemize}
		under $P_{\vartheta_0}$ with $\sigma^2(\vartheta_0):=(\vartheta_0+1)\frac{1+e^{-2\alpha\Delta}}{1-e^{-2\alpha \Delta}}.$
	\end{theorem}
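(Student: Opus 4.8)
My plan is to exploit the closed form of the estimator. Inserting the definition of $G_n$ and comparing with the formula for $\widehat{\vartheta}_n$ one verifies the purely algebraic identity
\begin{align*}
\widehat{\vartheta}_n-\vartheta_0=-(\vartheta_0+1)\,\frac{G_n(\vartheta_0)}{n\,(1-e^{-2\alpha\Delta})},
\end{align*}
so both parts reduce to the asymptotics of the martingale $G_n(\vartheta_0)=\sum_{i=1}^{n}D_i$ with increments
\begin{align*}
D_i:=\phi_1(X_{i\Delta},\vartheta_0)-e^{-2\alpha\Delta}\phi_1(X_{(i-1)\Delta},\vartheta_0);
\end{align*}
that $(D_i)$ are martingale differences with respect to the natural filtration is exactly the eigenfunction relation $\mathrm{E}_\vartheta(\phi_1(X_{i\Delta},\vartheta)\mid X_{(i-1)\Delta})=e^{-2\alpha\Delta}\phi_1(X_{(i-1)\Delta},\vartheta)$ recalled above. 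Throughout I would first treat the stationary version $X_0\sim\mu_{\vartheta_0}$ and afterwards remove this assumption using that $(X_t)_{t\ge0}$ is ergodic, so that the empirical law of the pairs $(X_{(i-1)\Delta},X_{i\Delta})$ still converges to $Q_\Delta^{\vartheta_0}$ for an arbitrary deterministic start $x_0>0$.

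For (i) I would apply the ergodic theorem to the two sums in $\widehat{\vartheta}_n$. A Gamma-integral computation against the invariant density gives $\mathrm{E}_{\vartheta_0}[X_\Delta^2]=\int_0^\infty x^2\mu_{\vartheta_0}(x)\diff x=(\vartheta_0+1)/\alpha$, hence $\frac1n\sum_{i=1}^n X_{i\Delta}^2\to(\vartheta_0+1)/\alpha$ in $P_{\vartheta_0}$-probability; the shifted average $\frac1n\sum_{i=1}^n X_{(i-1)\Delta}^2$ differs from this by a term of order $1/n$ (using tightness of $X_{n\Delta}$) and has the same limit. Substituting into
\begin{align*}
\widehat{\vartheta}_n=\frac{\alpha\sum_{i=1}^{n}\big(X_{i\Delta}^2-e^{-2\alpha\Delta}X_{(i-1)\Delta}^2\big)}{n\,(1-e^{-2\alpha\Delta})}-1
\end{align*}
yields $\widehat{\vartheta}_n\to\vartheta_0$.

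For (ii) I would apply the central limit theorem for square-integrable, stationary and ergodic martingale difference arrays to $n^{-1/2}G_n(\vartheta_0)$. Square integrability of $D_i$ is clear since $X_\Delta$ has polynomial moments of all orders under $\mu_{\vartheta_0}$; the conditional variance condition $\frac1n\sum_{i=1}^n\mathrm{E}_{\vartheta_0}(D_i^2\mid X_{(i-1)\Delta})\to v:=\mathrm{E}_{\vartheta_0}[D_1^2]$ and the conditional Lindeberg condition follow from stationarity and ergodicity. This gives $n^{-1/2}G_n(\vartheta_0)\to N(0,v)$, and combined with the identity of the first paragraph,
\begin{align*}
\sqrt n\,(\widehat{\vartheta}_n-\vartheta_0)\to N\!\left(0,\ \frac{(\vartheta_0+1)^2\,v}{(1-e^{-2\alpha\Delta})^2}\right).
\end{align*}
It remains to compute $v$. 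By the tower property and the eigenfunction identity, $\mathrm{E}_{\vartheta_0}[\phi_1(X_\Delta,\vartheta_0)\phi_1(X_0,\vartheta_0)]=e^{-2\alpha\Delta}\mathrm{E}_{\vartheta_0}[\phi_1(X_0,\vartheta_0)^2]$, and since $\mathrm{E}_{\vartheta_0}[\phi_1(X_0,\vartheta_0)]=0$ (again from $\mathrm{E}_{\vartheta_0}[X_0^2]=(\vartheta_0+1)/\alpha$), this gives $v=(1-e^{-4\alpha\Delta})\,\mathrm{E}_{\vartheta_0}[\phi_1(X_0,\vartheta_0)^2]$. A further Gamma-integral computation gives $\mathrm{E}_{\vartheta_0}[X_0^4]=(\vartheta_0+1)(\vartheta_0+2)/\alpha^2$, so $\mathrm{Var}_{\vartheta_0}(X_0^2)=(\vartheta_0+1)/\alpha^2$ and $\mathrm{E}_{\vartheta_0}[\phi_1(X_0,\vartheta_0)^2]=\alpha^2(\vartheta_0+1)^{-2}\,\mathrm{Var}_{\vartheta_0}(X_0^2)=1/(\vartheta_0+1)$, hence $v=(1-e^{-4\alpha\Delta})/(\vartheta_0+1)$. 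Substituting and factoring $1-e^{-4\alpha\Delta}=(1-e^{-2\alpha\Delta})(1+e^{-2\alpha\Delta})$ produces exactly $\sigma^2(\vartheta_0)=(\vartheta_0+1)\frac{1+e^{-2\alpha\Delta}}{1-e^{-2\alpha\Delta}}$.

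The computations here are routine moment integrals; the step requiring genuine care is the verification of the hypotheses of the martingale central limit theorem — in particular the conditional Lindeberg condition — together with the passage from the stationary process to a deterministic starting value $x_0$, which has to go through the ergodicity established in Section 2 rather than through stationarity directly. Alternatively, one may simply invoke the general limit theory for eigenfunction-based martingale estimating functions of \cite{kessler1999}, whose regularity assumptions are immediate here since every quantity involved is polynomial in $x$ and available in closed form; in that case the only remaining work is the variance computation sketched above.
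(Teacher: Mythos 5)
Your proposal is correct, and it reaches the conclusion by a genuinely different route than the paper. The paper verifies the regularity hypotheses of the general eigenfunction theory (\cite[Theorem 4.3]{kessler1999}): it defines $g(x,y,\vartheta)$, checks continuous differentiability in $\vartheta$, dominates $\partial_\vartheta g$ by a $\vartheta$-independent square-integrable function, and shows $\int\partial_\vartheta g\,\diff Q_\Delta^{\vartheta_0}\neq 0$, then reads off $\sigma^2=v/f^2$ from that theorem; the variance $v$ is obtained by brute force, expanding $g^2$ and computing the moments $\int x^{2n}\,\diff Q_\Delta^{\vartheta_0}$ and $\int x^2y^2\,\diff Q_\Delta^{\vartheta_0}$ explicitly. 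You instead observe that $G_n(\vartheta_0)$ is an exact affine function of $\widehat{\vartheta}_n$, namely $\widehat{\vartheta}_n-\vartheta_0=-(\vartheta_0+1)G_n(\vartheta_0)/\bigl(n(1-e^{-2\alpha\Delta})\bigr)$ (the identity checks out), which eliminates the Taylor-expansion/implicit-solution step entirely and reduces the theorem to the ergodic theorem plus the CLT for stationary ergodic square-integrable martingale differences. Your variance computation is also slicker: the tower property and the eigenfunction relation give $v=(1-e^{-4\alpha\Delta})\mean_{\vartheta_0}[\phi_1(X_0,\vartheta_0)^2]=(1-e^{-4\alpha\Delta})/(\vartheta_0+1)$ directly, avoiding the mixed moment $\int x^2y^2\,\diff Q_\Delta^{\vartheta_0}$. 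What the paper's approach buys is generality — it works verbatim when the estimating equation is not explicitly solvable (as in their Section 4 with two eigenfunctions) — while yours buys transparency and elementariness for this particular estimator. The two points you flag as needing care (the conditional Lindeberg condition, which for a strictly stationary ergodic $L^2$ martingale difference sequence is automatic by the Billingsley--Ibragimov CLT, and the passage from the stationary initial law to a deterministic $x_0$ via ergodicity) are exactly the points the paper outsources to \cite{kessler1999}, so nothing essential is missing.
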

\begin{proof}[Proof:]
	We define 
	\begin{align*}
		g(x,y,\vartheta):= 1-\frac{\alpha y^2}{\vartheta+1}-e^{-2\alpha\Delta}\left(1-\frac{\alpha x^2}{\vartheta+1}\right)
	\end{align*}
	a continuously differentiable function with respect to $\vartheta$. The absolute value of the derivative
	\begin{align*}
		\frac{\partial}{\partial\vartheta} g(x,y,\vartheta)= \frac{\alpha}{(\vartheta+1)^2}(y^2-e^{-2\alpha \Delta}x^2)
	\end{align*}
	is dominated by $4\alpha(y^2+e^{-2\alpha \Delta}x^2)$, which is independent of $\vartheta$ and square integrable with respect to $Q_\Delta^{\vartheta_0}$. Moreover, the symmetry in $x$ and $y$ of the density of $Q_\Delta^{\vartheta_0}$ implies
	\begin{align*}
		&\int_0^\infty \int_0^\infty \frac{\partial}{\partial\vartheta} g(x,y,\vartheta_0) Q_\Delta^{\vartheta_0}(\diff x,\diff y)\\
		&=\underbrace{\frac{\alpha}{(\vartheta_0+1)^2}(1-e^{-2\alpha \Delta})}_{>0} \underbrace{\int_0^\infty \int_0^\infty x^2 Q_\Delta^{\vartheta_0}(\diff x,\diff y)}_{>0}\not =0,
	\end{align*}
	which completes the proof of (i) and (ii) according to \cite[Theorem 4.3]{kessler1999}.

	Due to \cite{kessler1999}, the asymptotic variance is given by $\sigma^2(\vartheta_0)=\frac{v(\vartheta_0)}{f^2(\vartheta_0)}$ with the functions
	\begin{align*}
		f(\vartheta_0)&:=\int_0^\infty \int_0^\infty \frac{\partial}{\partial\vartheta} g(x,y,\vartheta_0) Q_\Delta^{\vartheta_0}(\diff x,\diff y)\\
		&=\frac{\alpha}{(\vartheta_0+1)^2}(1-e^{-2\alpha \Delta}) \int_0^\infty \int_0^\infty x^2 Q_\Delta^{\vartheta_0}(\diff x,\diff y),\\
		v(\vartheta_0)&:=\int_0^\infty \int_0^\infty  g^2(x,y,\vartheta_0) Q_\Delta^{\vartheta_0}(\diff x,\diff y)\stackrel{!}{=}\frac{1-e^{-4\alpha \Delta}}{\vartheta_0+1}.
	\end{align*}
	Because of the symmetry of $Q_\Delta^{\vartheta_0}$ and
	\begin{align*}
		g^2(x,y,\vartheta) = (1-e^{-2\alpha \Delta})^2+\frac{\alpha^2}{(\vartheta+1)^2}y^4+\frac{\alpha^2 e^{-4\alpha \Delta}}{(\vartheta+1)^2}x^4-(1-e^{-2\alpha \Delta})\frac{2\alpha}{\vartheta+1}y^2\\
		+(1-e^{-2\alpha \Delta}) \frac{2\alpha e^{-2\alpha \Delta }}{\vartheta+1}x^2-\frac{2\alpha^2 e^{-2\alpha \Delta}}{(\vartheta+1)^2}x^2y^2,
	\end{align*}
	we get
	\begin{multline*}
		v(\vartheta_0)=(1-e^{-2\alpha \Delta})^2\left(1-\frac{2\alpha}{\vartheta_0+1}\int_0^\infty \int_0^\infty x^2 Q_\Delta^{\vartheta_0}(\diff x,\diff y)\right)\\
		\shoveright{+\frac{\alpha^2(1+e^{-4\alpha \Delta})}{(\vartheta_0+1)^2}\int_0^\infty \int_0^\infty x^4 Q_\Delta^{\vartheta_0}(\diff x,\diff y)}\\
		-\frac{2\alpha^2 e^{-2\alpha \Delta}}{(\vartheta_0+1)^2}\int_0^\infty \int_0^\infty x^2y^2 Q_\Delta^{\vartheta_0}(\diff x,\diff y).
	\end{multline*}
	Furthermore, we can calculate
	\begin{align*}
		\int_0^\infty \int_0^\infty x^{2n}Q_\Delta^{\vartheta_0}(\diff x,\diff y)&=	\int_0^\infty \int_0^\infty x^{2n}\mu_{\vartheta_0} (x) p(x,y,\Delta)\diff x\diff y \\
		&=\int_0^\infty x^{2n}\mu_{\vartheta_0} (x)\diff x=\frac{\Gamma(n+{\vartheta_0}+1)}{\alpha^n\Gamma({\vartheta_0}+1)}.
	\end{align*}
	By using an explicit formula of the conditional mean, we conclude
	\begin{align*}
		\int_{0}^\infty	\int_{0}^\infty x^2y^2 Q_\Delta^{\vartheta_0}(\diff x,\diff y)&=	\int_{0}^\infty \int_{0}^\infty x^2y^2 \mu_{\vartheta_0}(x) p(x,y,\Delta)\diff y \diff x \\
		&=	\int_{0}^\infty x^2 \mean(X^2_{i\Delta} \cond{} X_{(i-1)\Delta}=x) \mu_{\vartheta_0}(x) \diff x\\
		&= \int_{0}^{\infty } x^2\left(x^2e^{-2\alpha \Delta}-\frac{\vartheta_0+1}{\alpha}(e^{-2\alpha\Delta}-1)\right)\mu_{\vartheta_0}(x) \diff x\\
		&= \frac{(\vartheta_0+1)^2}{\alpha^2}+e^{-2\alpha\Delta}\frac{\vartheta_0+1}{\alpha^2}.
	\end{align*}
	Applying these formulas we establish
	\begin{align*}
		\sigma^2(\vartheta_0)&=\frac{v(\vartheta_0)}{f^2(\vartheta_0)}=(\vartheta_0+1)\frac{1+e^{-2\alpha\Delta}}{1-e^{-2\alpha \Delta}}.
	\end{align*}
\end{proof}
Let us discuss the results. Looking at the asymptotic variance we see that it decreases when $\alpha \Delta $ is increasing. This seems surprisingly at the first glance, since it implies that the asymptotic variance decreases when the distance between observations increases, as we keep the mean reverting parameter $\alpha$ fixed. On the other hand, keeping in mind that equidistant observations for the stationary version of the Bessel process means the distance between two observations of the underlying Bessel process is exponentially growing. This leads to a fast growing observation interval, capturing the non-stationary behaviour of the original Bessel process. Furthermore, we see that the asymptotic variance tends to infinity as the mean-reverting parameter tends to zero.

Having a closer look at the estimator, we see that it only depends on the square of the observations, hence we could reformulate our problem and consider the squared process $Y_t:=X_t^2$. It\^{o}'s formula yields
	\begin{align*}
		\diff Y_t= 2\sqrt{Y_t}\diff B_t+(2\vartheta+2-2\alpha Y_t)\diff t,
	\end{align*}
an equation describing a Cox Ingersoll Ross process. We consider now the canonical linear martingale estimating function
	\begin{align*}
		\widetilde{G}_n(\vartheta)&:=\sum_{i=1}^n (Y_{i\Delta}- \mathrm{E}(Y_{i\Delta}|Y_{(i-1)\Delta}))\\
		&=\sum_{i=1}^n (Y_{i\Delta}- Y_{(i-1)\Delta}e^{-2\alpha \Delta}+\frac{\vartheta+1}{\alpha}(e^{-2\alpha\Delta}-1))\\
		&=-\frac{\vartheta+1}{\alpha}G_n(\vartheta).
	\end{align*}
For $\vartheta>-\frac{1}{2}$ the unique solution of $\widetilde{G}_n(\widehat{\vartheta}_n)=0$ is again
	\begin{align*}
		\widehat{\vartheta}_n=\frac{\alpha \sum_{i=1}^{n} (X_{i\Delta }^{2}-X_{(i-1)\Delta}^{2} e^{-2\alpha \Delta})}{n(1-e^{-2\alpha \Delta})}-1.
	\end{align*} 
Hence, we see that the two estimators coincide. In \ref{theo:asymptotic_estimator_mean_value} we have already established the consistence and asymptotic normality of $\widehat{\vartheta}_n$. 

The next step is to search for the optimal asymptotic variance by using estimators of the form 
	\begin{align*}
		\sum_{i=1}^{n} g_{i-1}(\vartheta) \left(X_{i\Delta}^2-X_{(i-1)\Delta}^2e^{-2\alpha\Delta}+\frac{\vartheta+1}{\alpha}(e^{-2\alpha\Delta}-1)\right),
	\end{align*}
where $g_{i-1}$ is $\sigma(X_\Delta,\dots, X_{n\Delta})$ measurable and continuously differentiable.
Considering this second approach via linear martingale estimating functions for the squared process, allows us easily to determine this optimal estimator, cf. \cite{heyde1988}, \cite{godambeheyde1987}.  By \cite[(2.10)]{bibbysorensen1995} the optimal estimator is given by
	\begin{align*}
		g_{i-1}(\vartheta):= \frac{\frac{\diff}{\diff \vartheta} \mean (X_{i\Delta}^2 \cond{} X_{(i-1)\Delta})}{\phi(X_{(i-1)\Delta}^2,\vartheta)}=\frac{1}{\frac{\vartheta+1}{\alpha}(1-e^{-2\alpha\Delta})+2X_{(i-1)\Delta}^2e^{-2\alpha\Delta}},
	\end{align*}
where $\phi$ is the conditional variance of $X_{i\Delta}$. Unfortunately, the equation
	\begin{multline*}
		\sum_{i=1}^{n} \frac{1}{\frac{\vartheta+1}{\alpha}(1-e^{-2\alpha\Delta})+2X_{(i-1)\Delta}^2e^{-2\alpha\Delta}}\times \\
		\times \left(X_{i\Delta}^2-X_{(i-1)\Delta}^2e^{-2\alpha\Delta}+\frac{\vartheta+1}{\alpha}(e^{-2\alpha\Delta}-1)\right)=0
	\end{multline*}
is not explicitly solvable with respect to $\vartheta$. However, we can nevertheless determine the improvement in the asymptotic variance. Following again the same lines as \cite[Theorem 3.2]{bibbysorensen1995}, we have to establish the finiteness of 
	\begin{align*}
		\mean_{\mu_{\vartheta_0}}\left( g_{i-1}(\vartheta_0)\frac{\diff}{\diff \vartheta} \mean (X_{i\Delta}^2 \cond{} X_{(i-1)\Delta})\right) &=\mean_{\mu_{\vartheta_0}} \left(\frac{1}{\vartheta_0+1+\frac{2\alpha e^{-2\alpha \Delta}}{1-e^{-2\alpha\Delta}} X_{(i-1)\Delta}^2}\right)\\
		&< \frac{1}{\vartheta_0+1},
	\end{align*}
the reciprocal of the asymptotic variance. Consequently, we can deduce that a lower bound of the optimal variance is given by $\vartheta_0 +1$.
\begin{figure}[h]
			\centering 
			\includegraphics[scale=0.35]{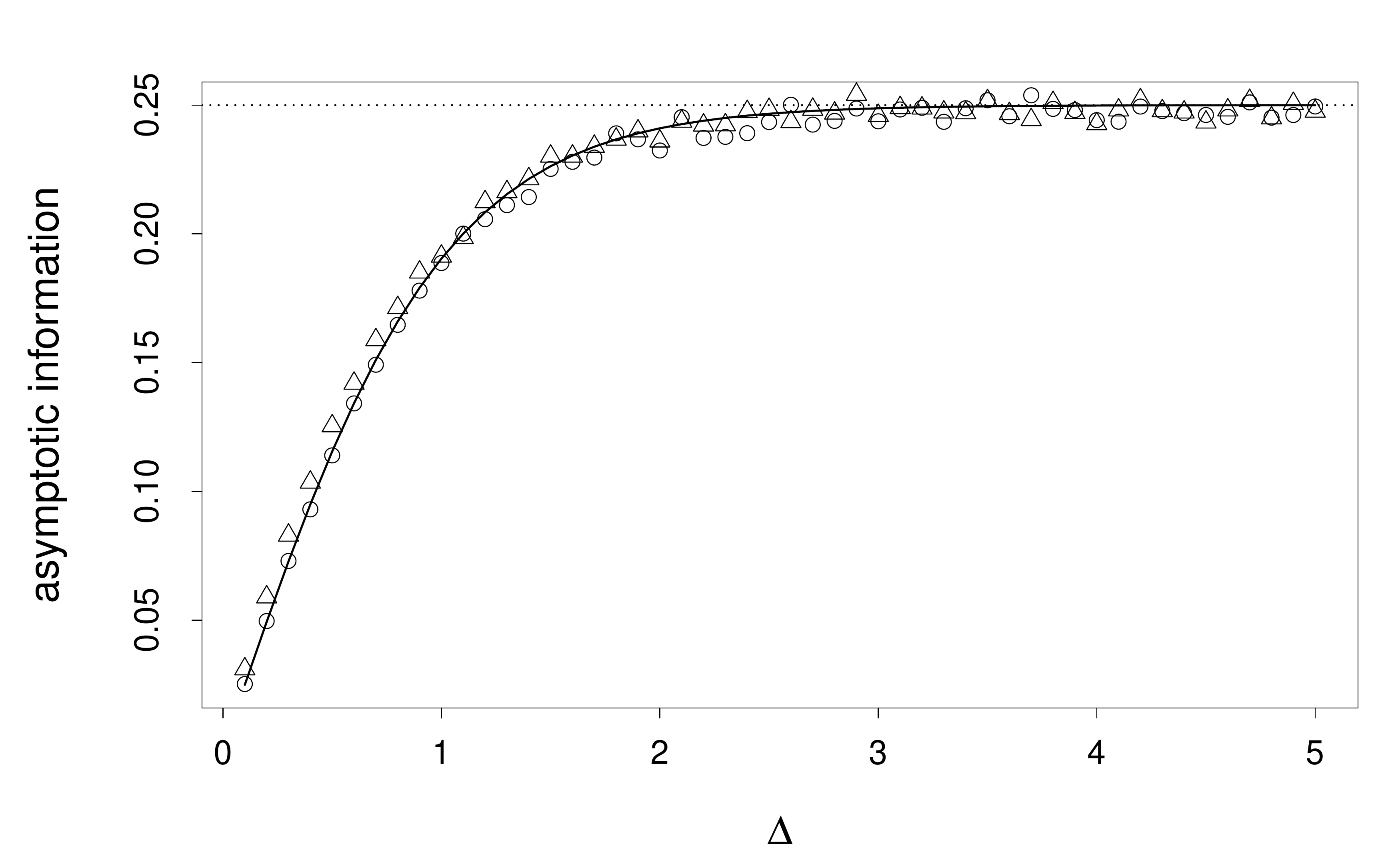}
			\caption{The asymptotic behavior for $\alpha=1, x_0=0.1, \vartheta=3$.}
			\label{fig:asymptotic_behavior}
	\end{figure}

Figure \ref{fig:asymptotic_behavior} shows the asymptotic behaviour of the 10.000 simulated optimal estimator (triangles) and $\widehat{\vartheta}_n$ (dots) for $n=1.000$. The solid line corresponds to the calculated asymptotic information of $\widehat{\vartheta}_n$ in \ref{theo:asymptotic_estimator_mean_value}. The dotted line represents our computed bound above.  As the lines nearly touch around $\Delta=3$, the improvement of the optimal estimator quickly tends to zero.  Starting from the value $\Delta=1$ the simulated asymptotic information is almost the same for both estimators. Beforehand, the improvement is clearly visible but we do not want to maintain such a high variance as we can choose the value of $\alpha\Delta$ such that the asymptotic variance is close to the lower bound. 

 We take a closer look at the asymptotic variance of $\widehat{\vartheta}_n$, which decreases monotonously in $\alpha \Delta$:
\begin{align*}
	\lim\limits_{\alpha \Delta\to \infty} (\vartheta_0+1)\frac{1+e^{-2\alpha\Delta}}{1-e^{-2\alpha\Delta}}=\vartheta_0+1.
\end{align*}
Due to the fast convergence to the lower bound $\vartheta_0+1$, we can for practical purposes restrict ourselves to the estimator $\widehat{\vartheta}_n$ and hence have an explicit estimator.

\section{Estimator based on two eigenfunctions}

Now, we try to improve the asymptotic variance further by considering martingale estimating functions based on two eigenfunctions. Yet, this approach suffers from the drawback that we do not get an explicit estimator anymore.

We consider
\begin{align*}
	H_n(\vartheta):=\sum_{i=1}^{n} \sum_{j=1}^{2} \beta_j(\vartheta)\left(\phi_j(X_{i\Delta,}\vartheta)-e^{-\lambda_j(\vartheta)\Delta} \phi_j(X_{(i-1)\Delta},\vartheta)\right),
\end{align*}
where $\beta_1$ and $\beta_2$ are continuously differentiable functions only depending on $\vartheta$. Under suitable conditions on the interplay between the weights $\beta_i$ and the eigenfunctions, we can easily achieve a consistent and asymptotic normal estimator. 
\begin{theorem}
	\label{theo:asymptotic_variance}
	If for every $\vartheta\in \Theta$ 
	$$f(\beta_1,\beta_2,\vartheta):=\beta_1(\vartheta)\frac{1-e^{-2\alpha\Delta}}{\vartheta+1}+\beta_2(\vartheta)\frac{1-e^{-4\alpha\Delta}}{(\vartheta+1)(\vartheta+2)}\not=0$$
	is satisfied, then there exists a solution of $H_n(\widehat{\vartheta}_{n,2})=0$ with a probability tending to one as $n\to \infty$ under $P_{\vartheta_0}$. Furthermore, for every true value $\vartheta_0 \in \Theta \subset (-\frac{1}{2},\infty)$ we have 
	\begin{itemize}
		\item[(i)] $\widehat{\vartheta}_{n,2}\to \vartheta_0$ in probability and
		\item[(ii)] $\sqrt{n}(\widehat{\vartheta}_{n,2}-\vartheta_0)\to N\left(0,\frac{v(\beta_1,\beta_2,\vartheta_0)}{f^2(\beta_1,\beta_2,\vartheta_0)}\right)$ in distribution
	\end{itemize}
	under $P_{\vartheta_0}$ with
	\begin{align*}
		v(\beta_1,\beta_2,\vartheta_0):=\beta_1^2(\vartheta_0)\frac{1-e^{-4\alpha\Delta}}{\vartheta_0+1}+\beta_2^2(\vartheta_0)\frac{2-2e^{-8\alpha\Delta}}{(\vartheta_0+1)(\vartheta_0+2)}.
	\end{align*}
\end{theorem}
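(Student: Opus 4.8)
The plan is to recognise $H_n$ as a martingale estimating function of the type treated in \cite[Section~4]{kessler1999} and to verify the hypotheses of \cite[Theorem~4.3]{kessler1999}, exactly as in the proof of Theorem~\ref{theo:asymptotic_estimator_mean_value}, while evaluating the relevant integrals explicitly. Put
\begin{align*}
	g(x,y,\vartheta):=\sum_{j=1}^{2}\beta_j(\vartheta)\bigl(\phi_j(y,\vartheta)-e^{-2\alpha j\Delta}\phi_j(x,\vartheta)\bigr),
\end{align*}
so that $H_n(\vartheta)=\sum_{i=1}^{n}g(X_{(i-1)\Delta},X_{i\Delta},\vartheta)$; note that $\lambda_j=2\alpha j$ does not depend on $\vartheta$, so $e^{-\lambda_j\Delta}$ is a constant in what follows. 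Since $\beta_1,\beta_2$ are continuously differentiable and $\phi_1,\phi_2$ are polynomials in $x$ with coefficients smooth in $\vartheta\in(-\tfrac12,\infty)$, $g$ is continuously differentiable in $\vartheta$; and the eigenfunction identity recalled before Theorem~\ref{theo:asymptotic_estimator_mean_value} gives $\mathrm{E}_{\vartheta}(\phi_j(X_{i\Delta},\vartheta)|X_{(i-1)\Delta})=e^{-2\alpha j\Delta}\phi_j(X_{(i-1)\Delta},\vartheta)$, so $H_n$ is a zero-mean $P_\vartheta$-martingale. Moreover $\tfrac{\partial}{\partial\vartheta}g$ is a polynomial in $x^2$ and $y^2$ with coefficients continuous in $\vartheta$, hence on every compact subset of $\Theta$ it is dominated by a fixed polynomial in $x^2,y^2$; this polynomial is square-integrable with respect to $Q_\Delta^{\vartheta_0}$, since $\mu_{\vartheta_0}$ has finite moments of all orders and both marginals of $Q_\Delta^{\vartheta_0}$ equal $\mu_{\vartheta_0}$. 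This supplies the regularity needed to invoke \cite[Theorem~4.3]{kessler1999}.

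Next I would determine the two quantities governing the limiting law. Writing
\begin{align*}
	\frac{\partial}{\partial\vartheta}g(x,y,\vartheta)&=\sum_{j=1}^{2}\beta_j'(\vartheta)\bigl(\phi_j(y,\vartheta)-e^{-2\alpha j\Delta}\phi_j(x,\vartheta)\bigr)\\
	&\quad+\sum_{j=1}^{2}\beta_j(\vartheta)\bigl(\partial_\vartheta\phi_j(y,\vartheta)-e^{-2\alpha j\Delta}\partial_\vartheta\phi_j(x,\vartheta)\bigr),
\end{align*}
the first sum has vanishing $Q_\Delta^{\vartheta_0}$-integral by the eigenfunction identity (each summand $\phi_j(y,\vartheta_0)-e^{-2\alpha j\Delta}\phi_j(x,\vartheta_0)$ has $Q_\Delta^{\vartheta_0}$-mean zero), while for the second sum the fact that both marginals of $Q_\Delta^{\vartheta_0}$ are $\mu_{\vartheta_0}$ gives
\begin{align*}
	&\int_0^\infty\int_0^\infty\bigl(\partial_\vartheta\phi_j(y,\vartheta_0)-e^{-2\alpha j\Delta}\partial_\vartheta\phi_j(x,\vartheta_0)\bigr)\,Q_\Delta^{\vartheta_0}(\diff x,\diff y)\\
	&\qquad=(1-e^{-2\alpha j\Delta})\int_0^\infty\partial_\vartheta\phi_j(x,\vartheta_0)\,\mu_{\vartheta_0}(x)\diff x.
\end{align*}
With $\phi_1(x,\vartheta)=1-\tfrac{\alpha x^2}{\vartheta+1}$, $\phi_2(x,\vartheta)=1-\tfrac{2\alpha x^2}{\vartheta+1}+\tfrac{(\alpha x^2)^2}{(\vartheta+1)(\vartheta+2)}$ and the moment formula $\int_0^\infty x^{2n}\mu_{\vartheta_0}(x)\diff x=\Gamma(n+\vartheta_0+1)/(\alpha^n\Gamma(\vartheta_0+1))$ already used in Section~3, the last integral equals $(\vartheta_0+1)^{-1}$ for $j=1$ and $((\vartheta_0+1)(\vartheta_0+2))^{-1}$ for $j=2$, so that $\int_0^\infty\int_0^\infty\partial_\vartheta g(x,y,\vartheta_0)\,Q_\Delta^{\vartheta_0}(\diff x,\diff y)=f(\beta_1,\beta_2,\vartheta_0)$; this is nonzero by hypothesis, and the requirement $f(\beta_1,\beta_2,\vartheta)\neq0$ for every $\vartheta$ is precisely the non-degeneracy condition that delivers a consistent root.

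For the asymptotic variance I would expand
\begin{align*}
	g^2(x,y,\vartheta_0)=\sum_{j,k=1}^{2}\beta_j(\vartheta_0)\beta_k(\vartheta_0)&\bigl(\phi_j(y,\vartheta_0)-e^{-2\alpha j\Delta}\phi_j(x,\vartheta_0)\bigr)\\
	&\times\bigl(\phi_k(y,\vartheta_0)-e^{-2\alpha k\Delta}\phi_k(x,\vartheta_0)\bigr)
\end{align*}
and integrate term by term against $Q_\Delta^{\vartheta_0}$. For each fixed pair $(j,k)$ the four resulting integrals collapse, using the relation $\int_0^\infty\phi_j(y,\vartheta_0)\,p(x,y,\Delta)\diff y=e^{-2\alpha j\Delta}\phi_j(x,\vartheta_0)$ (and its analogue with $k$) together with the symmetry of $Q_\Delta^{\vartheta_0}$, onto the single expression $(1-e^{-2\alpha(j+k)\Delta})\int_0^\infty\phi_j(x,\vartheta_0)\phi_k(x,\vartheta_0)\mu_{\vartheta_0}(x)\diff x$. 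Since $\phi_1$ and $\phi_2$ are eigenfunctions of the $\mu_{\vartheta_0}$-symmetric operator $L_{\vartheta_0}$ for distinct eigenvalues, the inner product $\int_0^\infty\phi_1\phi_2\,\mu_{\vartheta_0}$ vanishes, so only the diagonal terms survive and
\begin{align*}
	v(\beta_1,\beta_2,\vartheta_0)=\sum_{j=1}^{2}\beta_j^2(\vartheta_0)\,(1-e^{-4\alpha j\Delta})\int_0^\infty\phi_j(x,\vartheta_0)^2\mu_{\vartheta_0}(x)\diff x.
\end{align*}
Evaluating the remaining integrals from the same moment formula, namely $\int_0^\infty\phi_1^2\,\mu_{\vartheta_0}=(\vartheta_0+1)^{-1}$ and $\int_0^\infty\phi_2^2\,\mu_{\vartheta_0}=2\bigl((\vartheta_0+1)(\vartheta_0+2)\bigr)^{-1}$, reproduces the stated $v(\beta_1,\beta_2,\vartheta_0)$; \cite[Theorem~4.3]{kessler1999} then delivers all three conclusions: the existence of a root $\widehat{\vartheta}_{n,2}$ of $H_n$ with probability tending to one, the consistency~(i), and the asymptotic normality~(ii) with variance $v(\beta_1,\beta_2,\vartheta_0)/f^2(\beta_1,\beta_2,\vartheta_0)$. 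I expect the main obstacle to be the vanishing of the off-diagonal part of $v$: one has to check that the four integrals in each $(j,k)$-block really do reduce to a multiple of $\int_0^\infty\phi_j\phi_k\,\mu_{\vartheta_0}$, and then justify that $\phi_1$ and $\phi_2$ are orthogonal in $L^2(\mu_{\vartheta_0})$ — either abstractly from the self-adjointness of $L_{\vartheta_0}$ on $L^2(\mu_{\vartheta_0})$ with $2\alpha\neq4\alpha$, or by a direct computation with the explicit polynomials. Everything else is routine bookkeeping with the Laguerre-type moments of $\mu_{\vartheta_0}$.
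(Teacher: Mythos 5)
Your proposal is correct and follows essentially the same route as the paper: verify the hypotheses of Kessler--S{\o}rensen's Theorem 4.3 and compute $f$ and $v$ using the symmetry of $Q_\Delta^{\vartheta_0}$, the orthogonality $\int_0^\infty\phi_1\phi_2\,\mu_{\vartheta_0}=0$, and the moments of $\mu_{\vartheta_0}$. The only (cosmetic) difference is that you collapse each block of $v$ via the single identity $\alpha_{jk}=(1-e^{-(\lambda_j+\lambda_k)\Delta})\int_0^\infty\phi_j\phi_k\,\mu_{\vartheta_0}$ using the eigenfunction relation, whereas the paper evaluates $\alpha_{12}$ and $\alpha_{22}$ by separate explicit conditional-moment calculations; both yield the stated $v(\beta_1,\beta_2,\vartheta_0)$.
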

\begin{proof}[Proof:]	
	As  by the assumption $f(\cdot, \cdot, \vartheta)\not = 0$ for every $\vartheta\in \Theta$, we conclude $\beta_1(\vartheta)\not =0$ or $\beta_2(\vartheta)\not = 0$ and consequently $v(\cdot, \cdot,\vartheta)\not = 0 $ for every $\vartheta\in\Theta$. Using again \cite{kessler1999} we only have to establish the formulas of $f$ and $v$. In our calculations below we need the following straight forward properties
	\begin{itemize}
		\item[(a)] $Q_\Delta$ symmetric,
		\item[(b)] $\int_0^\infty \phi_1(x,\vartheta)\phi_2(x,\vartheta)\mu_\vartheta(x)\diff x=0$,
		\item[(c)] $\int_0^\infty \phi_j(x,\vartheta)\mu_\vartheta(x)\diff x=0$,
		\item[(d)] $\int_0^\infty x^{2\eta} \mu_{\vartheta}(x)\diff x=\frac{\Gamma(\eta+\vartheta+1)}{\alpha^\eta\Gamma(\vartheta+1)}$ for $\eta \in \mathbb{N}$.
	\end{itemize}
	\textbf{Step 1:} According to \cite{kessler1999}, the formula for $f$ is given by 
	\begin{align*}
		f(\beta_1,\beta_2,\vartheta):=\sum_{i=1}^2 \int_0^\infty \int_0^\infty \frac{\partial }{\partial \vartheta} \beta_i(\vartheta)\left(\phi_i(x,\vartheta)-e^{-2\alpha\Delta} \phi_i(y,\vartheta)\right) Q_\Delta^\vartheta(\diff x,\diff y).
	\end{align*}
	We can easily calculate the two summands
	\begin{align*}
		&\int_0^\infty \int_0^\infty \frac{\partial }{\partial \vartheta} \beta_1(\vartheta)\left(\phi_1(x,\vartheta)-e^{-2\alpha\Delta} \phi_1(y,\vartheta)\right) Q_\Delta^\vartheta(\diff x,\diff y)\\
		&\stackrel{\textrm{(a)}}{=}(1-e^{-2\alpha\Delta})\int_{0}^\infty \int_0^\infty \frac{\partial }{\partial \vartheta} \beta_1(\vartheta)\phi_1(x,\vartheta)  Q_\Delta^\vartheta(\diff x,\diff y)\\
		&\stackrel{\textrm{(c)}}{=}(1-e^{-2\alpha\Delta}) \beta_1(\vartheta) \int_{0}^\infty \frac{\partial }{\partial \vartheta} \phi_1(x,\vartheta)  \mu_\vartheta(x)\diff x\\
		&=(1-e^{-2\alpha\Delta}) \beta_1(\vartheta) \int_{0}^\infty \frac{\alpha x^2}{(\vartheta+1)^2} \mu_\vartheta(x)\diff x\\
		&\stackrel{\textrm{(d)}}{=}\beta_1(\vartheta)\frac{ 1-e^{-2\alpha\Delta}}{\vartheta+1}
	\end{align*}
	and similarly
	\begin{align*}
		&\int_0^\infty \int_0^\infty \frac{\partial }{\partial \vartheta} \beta_2(\vartheta)\left(\phi_2(x,\vartheta)-e^{-4\alpha\Delta} \phi_2(y,\vartheta)\right) Q_\Delta^\vartheta(\diff x,\diff y)\\
		&=\beta_2(\vartheta)\frac{1-e^{-4\alpha\Delta}}{(\vartheta+1)(\vartheta+2)}.
	\end{align*}
	\textbf{Step 2:}  According to \cite{kessler1999}, we receive 
	\begin{align*}
		v(\vartheta)=\sum_{i,j=1}^{2} \beta_i(\vartheta)\beta_j(\vartheta)\alpha_{ij}(\vartheta)
	\end{align*}
	with 
	\begin{align*}
		\alpha_{ij}:=\int_0^\infty \int_0^\infty \left(\phi_i(y,\vartheta)-e^{-\lambda_i\Delta}\phi_i(x,\vartheta)\right)\hspace{-0.5mm}\cdot\hspace{-0.5mm}\left(\phi_j(y,\vartheta)-e^{-\lambda_j\Delta}\phi_j(x,\vartheta)\right)Q_\Delta(\diff x,\diff y).
	\end{align*}
	If we take a look at the proof of \ref{theo:asymptotic_estimator_mean_value}, we recognize the already calculated value  
	\begin{align*}
		\int_0^\infty \int_0^\infty \left(1-\frac{\alpha y^2}{\vartheta+1}-e^{-2\alpha\Delta}\left(1-\frac{\alpha x^2}{\vartheta+1}\right)\right)^2 Q_\Delta(\diff x,\diff y)=\frac{1-e^{-4\alpha\Delta}}{\vartheta+1}.
	\end{align*}
	For the remaining terms, it holds
	\begin{align*}
		&\int_0^\infty \int_0^\infty \left(\phi_1(y,\vartheta)-e^{-2\alpha\Delta}\phi_1(x,\vartheta)\right)\hspace{-0.5mm}\cdot\hspace{-0.5mm}\left(\phi_2(y,\vartheta)-e^{-4\alpha\Delta}\phi_2(x,\vartheta)\right)Q_\Delta(\diff x,\diff y)\\
		&\stackrel{\textrm{(a)},\textrm{(b)}}{=}-(e^{-2\alpha\Delta}+e^{-4\alpha \Delta})\int_0^\infty \int_0^\infty \phi_1(y,\vartheta)\phi_2(x,\vartheta ) Q_\Delta (\diff x, \diff y)\\
		&= -(e^{-2\alpha\Delta}+e^{-4\alpha \Delta})\int_0^\infty \int_0^\infty \left(1- \frac{\alpha y^2}{\vartheta+1}\right)p(x,y,\Delta)\diff y \phi_2(x,\vartheta ) \mu_\vartheta(x) \diff x\\
		&\stackrel{\textrm{(c)}}{=}(e^{-2\alpha\Delta}+e^{-4\alpha \Delta})\int_0^\infty \frac{\alpha}{\vartheta+1}\mean_{\mu_{\vartheta}}(X_\Delta^2 \cond{} X_0=x) \phi_2(x,\vartheta ) \mu_\vartheta(x) \diff x\\
		&=(e^{-2\alpha\Delta}+e^{-4\alpha \Delta})\int_0^\infty \left( \frac{\alpha}{\vartheta+1}x^2e^{-2\alpha \Delta}+1-e^{-2\alpha\Delta} \right) \phi_2(x,\vartheta ) \mu_\vartheta(x) \diff x\\
		&\stackrel{\textrm{(c)}}{=}\frac{\alpha(e^{-4\alpha\Delta}+e^{-6\alpha \Delta})}{\vartheta+1}\int_0^\infty x^2 \phi_2(x,\vartheta ) \mu_\vartheta(x) \diff x\\
		&=\frac{\alpha(e^{-4\alpha\Delta}+e^{-6\alpha \Delta})}{\vartheta+1}\int_0^\infty \left( x^2-\frac{2\alpha x^4}{\vartheta+1} +\frac{\alpha^2x^6}{(\vartheta+1)(\vartheta+2)} \right) \mu_\vartheta(x) \diff x\\
		&\stackrel{\textrm{(d)}}{=}\frac{(e^{-4\alpha\Delta}+e^{-6\alpha \Delta})}{\vartheta+1}\left(\vartheta+1-2(\vartheta+2)+\vartheta+3\right)\\
		&=0
	\end{align*}
	and by similar calculations, we get
	\begin{align*}
		&\int_0^\infty \int_0^\infty \left(\phi_2(y,\vartheta)-e^{-4\alpha\Delta}\phi_2(x,\vartheta)\right)^2Q_\Delta(\diff x,\diff y)\\
		&\stackrel{\textrm{(a)}}{=}(1+e^{-8\alpha \Delta})\int_0^\infty \phi_2^2(x,\vartheta)\mu_{\vartheta}(x)\diff x -2e^{-4\alpha\Delta} \int_0^\infty \int_0^\infty \phi_2(x,\vartheta)\phi_2(y,\vartheta) Q_\Delta(\diff x, \diff y)\\
		&=\frac{2-2^{-8\alpha\Delta}}{(\vartheta+1)(\vartheta+2)}.
	\end{align*}
	\end{proof}
	Our aim is now to find $\beta_i$s, which lead to the smallest asymptotic variance as $\alpha \Delta\to \infty$. Therefore, we define for fixed $\vartheta \in \Theta$ the approximated functions
\begin{align*}
	&\widetilde{v}(\beta_1,\beta_2):=\frac{\beta_1^2(\vartheta)}{\vartheta+1}+\frac{2\beta_2^2(\vartheta)}{(\vartheta+1)(\vartheta+2)},\\
	&\widetilde{f}(\beta_1,\beta_2):=\frac{\beta_1(\vartheta)}{\vartheta+1}+\frac{\beta_2(\vartheta)}{(\vartheta+1)(\vartheta+2)},
\end{align*}
for which 
\begin{align*}
	\lim\limits_{\alpha\Delta\to \infty}\left| \frac{v(\vartheta)}{f^2(\vartheta)} - \frac{\widetilde{v}(\vartheta)}{\widetilde{f}^2(\vartheta)} \right|=0 
\end{align*}
holds. 
This property justifies the search for the global minimum of 
\begin{align*}
	(\beta_1,\beta_2)\mapsto \frac{\widetilde{v}(\beta_1,\beta_2)}{\widetilde{f}^2(\beta_1,\beta_2)}.
\end{align*}
To establish the minimum we first simplify the function
\begin{align*}
	\frac{\widetilde{v}(\beta_1,\beta_2)}{\widetilde{f}^2(\beta_1,\beta_2)}&= \frac{\frac{\beta_1^2}{\vartheta+1}+\frac{2\beta_2^2}{(\vartheta+1)(\vartheta+2)} }{\left( \frac{\beta_1}{\vartheta+1}+\frac{\beta_2}{(\vartheta+1)(\vartheta+2)}\right)^2}\\
	&=(\vartheta+1)(\vartheta+2)\frac{(\vartheta+2)\beta_1^2+2\beta_2^2 }{\left( (\vartheta+2)\beta_1+\beta_2\right)^2}
\end{align*}
and determine the first derivatives
\begin{align*}
	\frac{\diff }{\diff \beta_1}\frac{\widetilde{v}(\beta_1,\beta_2)}{\widetilde{f}^2(\beta_1,\beta_2)}&= 
	2(\vartheta+1)(\vartheta+2)^2\frac{\beta_1\beta_2-2\beta_2^2}{ \left( (\vartheta+2)\beta_1+\beta_2\right)^3},
\end{align*}
\begin{align*}
	\frac{\diff }{\diff \beta_2}\frac{\widetilde{v}(\beta_1,\beta_2)}{\widetilde{f}^2(\beta_1,\beta_2)}&= 2(\vartheta+1)(\vartheta+2)^2\frac{2\beta_1\beta_2-\beta_1^2 }{\left( (\vartheta+2)\beta_1+\beta_2\right)^3}.
\end{align*}
Taking into account the properties of the $\beta_i$s in Theorem \ref{theo:asymptotic_variance}, we get as possible minima $\beta_1=2\beta_2\not=0$ with value
\begin{align*}
	\frac{\widetilde{v}(2\beta_2,\beta_2)}{\widetilde{f}^2(2\beta_2,\beta_2)}&=\frac{2(\vartheta+1)(\vartheta+2) }{2\vartheta+5}.
\end{align*}	
In order to check, if we indeed have minima, we consider $\beta_1\not = 2\beta_2$
and see
\begin{align*}
	&\frac{\widetilde{v}(\beta_1,\beta_2)}{\widetilde{f}^2(\beta_1,\beta_2)}-\frac{2(\vartheta+1)(\vartheta+2) }{2\vartheta+5}=(\vartheta+1)(\vartheta+2)^2\frac{(\beta_1-2\beta_2)^2}{(2\vartheta+5)\left( (\vartheta+2)\beta_1+\beta_2\right)^2}>0.
\end{align*}
Hence, these critical points are global minima.
Finally, we may specify the improvement of the asymptotic variance 
\begin{align*}
	&\vartheta+1-\frac{2(\vartheta+1)(\vartheta+2) }{2\vartheta+5} =(\vartheta+1)\frac{2\vartheta+5-2(\vartheta+2)}{2\vartheta+5} =\frac{\vartheta+1}{2\vartheta+5}>0
\end{align*}
if we consider the asymptotic behaviour $\alpha\Delta\to \infty$. Hence, we see that relative improvement compared to $\vartheta+1$ is $\frac{1}{2\vartheta+5}$ and decreases as $\vartheta$ increases. However, for the boundary case $\vartheta=-1/2$ we get an improvement of $25\%$. For the case $\vartheta=0$, which for a Dunkl process separates between finite and infinite jump activity, we still get an improvement of $20\%$.\\

{\bf{Acknowledgements.}}\\
The financial support of the DFG-GRK 2131 is gratefully acknowledged.


\begin{thebibliography}{999}

\bibitem[bs1995]{bibbysorensen1995}
Bibby, B.M. and S{\o}rensen, M, Martingale estimation functions for discretely observed diffusion processes, \textit{Bernoulli}, 1 (1995),  {17--39}.

\bibitem[cgy2008]{CGY} Chybiryakov,O., Gallardo, L. and Yor, M., {Dunkl processes and their radial parts relative to a root system}. In:
 P. Graczyk et al. (eds.), Harmonic and stochastic analysis of Dunkl processes. Hermann, Paris 2008.

\bibitem[gh1987]{godambeheyde1987}
Godambe, V.P. and  Heyde C.C.,
 {Quasi-Likelihood and Optimal Estimation},
  \bibitem {International Statistical Review }, 55 (1987), {231--244}.
	
\bibitem[h1988]{heyde1988}
 {Heyde, C.C.},
  {Fixed sample and asymptotic optimality for classes of estimating functions}. \textit{Contemporary Mathematics}, 80 (1988), {241--247}.

\bibitem[h2014]{Hoepfner2014}
{H{\"o}pfner, R.},
 {Asymptotic Statistics: With a View to Stochastic Processes}, {De Gruyter}, {2014}.

\bibitem[imk1974]{ito1974diffusion}
{It\^o, K. and McKean, H.P.},
 {Diffusion Processes and Their Sample Paths}, {Springer}, {1974}.

\bibitem[ks1999]{kessler1999}
Kessler, M. and S{\o}rensen, M.,
{Estimating equations based on eigenfunctions for a discretely observed diffusion process}.
\textit{Bernoulli}, 5 (1999), {299--314}.

\bibitem[or1997]{overbeck}
Overbeck, L. and Ryden, Estimation in the Cox Ingersoll Ross model.\textit{Econometric Theory}, 13 (1997) {430--461}.

\bibitem[s12]{sorensen2012}
{S{\o}rensen, M.},
  {Estimating functions for diffusion-type processes}.
  \textit{Statistical Methods for Stochastic Differential Equations}, (2012), {1--108}.
		
	
\end{thebibliography}
\end{document}